\documentclass{amsart}

\usepackage[utf8]{inputenc}
\usepackage[english]{babel}
\usepackage{amsmath,amssymb,amsthm}
\usepackage{lmodern}
\newtheorem{theorem}{Theorem}
\newtheorem{lemma}{Lemma}
\newtheorem{proposition}{Proposition}

\newcommand{\bbC}{\mathbb{C}}
\newcommand{\bbR}{\mathbb{R}}
\newcommand{\bbN}{\mathbb{N}}
\newcommand{\bbZ}{\mathbb{Z}}
\newcommand{\bbQ}{\mathbb{Q}}

%\newcommand{\floor}[1]{\lfloor #1 \rfloor}

%
% Specify your own bibtex file, preferrably at `biblatex' format (don't forget
% the `.bib' extension below) in the argument of the \addbibresource command.

%
% Should acronyms be used in the article, define them thanks to \newacronym
% command from `glossaries' package as follows:
%  - 1st argument: ⟨label⟩      of the acronym (also called key),
%  - 2nd argument: ⟨short form⟩ of the acronym (lowercase!),
%  - 3rd argument: ⟨long form⟩  of the acronym,
% and use them with \gls{⟨label⟩} (or, if needed, with \acrshort{⟨label⟩}).
% See `glossaries' package's documentation for more details.
% \newacronym{}{}{}
%
\usepackage{lmodern}
\DeclareMathAlphabet{\mathbbo}{U}{bbold}{m}{n}
\newcommand{\1}{\mathbbo{1}}
\newcommand{\bbE}{\mathbb{E}}
\newcommand{\floor}[1]{\lfloor #1 \rfloor}
\newcommand{\bbP}{\mathbb{P}}
\DeclareMathOperator{\Reelle}{Re}
\DeclareMathOperator{\Imaginaire}{Im}

\begin{document}

\title{A simple Master Theorem for Discrete Divide and Conquer Recurrences}

{
\author{Olivier Garet}
\address{Université de Lorraine, CNRS, IECL, F-54000 Nancy, France}
\email{Olivier.Garet@univ-lorraine.fr}
}

\def\motsclefs{Divide-and-conquer recurrence, Dirichlet series, Tauberian theorem.}

\subjclass[2010]{11B37,68W40,60F15.}

\keywords{\motsclefs}

\begin{abstract}
   The aim of this note is to provide a Master Theorem for some
  discrete divide and conquer recurrences:
  \[X_{n}=a_n+\sum_{j=1}^m b_j X_{\floor{\frac{n}{m_j}}},\]
  where the $m_i$'s are integers with $m_i\ge 2$.
  The main novelty of this work is there is no assumption of regularity or monotonicity for $(a_n)$.
  Then, this result can be applied to various sequences of random variables $(a_n)_{n\ge 0}$, for example
  such that $\sup_{n\ge 1}\bbE(|a_n|)<+\infty$.
  \end{abstract}
.
\maketitle

\begin{center}\emph{
  Published by the North-Western European Journal of Mathematics,\\
  n° 8 (2022) p. 91--100}
\end{center}  

\section{Introduction}
Divide-and-conquer methods are widely used in Computer Science.
The analysis of the cost of the algorithm naturally leads to divide-and-conquer recurrences.
The methods to study these recurrences are  popularized as ``Master theorems'' in the literature
of Computer Science. See e.g. the reference books by Cormen et al~\cite{cormen} or Goodrich and Tamassia~\cite{goodrich}.

In the sequel, we consider  sequences $(X_n)_{n\ge 0}$ that are defined by
$X_0=a_0$, then
\begin{align}
  \label{recurr}
  X_{n}&=a_n+\sum_{j=1}^m b_j X_{\floor{\frac{n}{m_j}}},
\end{align}
where the $m_i$'s are integer with $m_i\ge 2$ and $\floor{x}$ denotes the only $n\in\mathbb{Z}$ such that $x-n\in [0,1)$. 

Of course, in Computer Science, $a_n$ and $X_n$ represent  computation times and are therefore positive.
However, the case of negative $a_n$ and $X_n$ can be of theoretical interest.

In the literature of Computer Science, $(a_n)$ is supposed to be deterministic.
Nevertheless, in the context of randomized algorithm, eventually involving Monte-Carlo simulation, it is natural
to consider the case of a random $(a_n)$ and observe the fluctuations of the computation time.

One of the most general results in the field of Computer Science is due to Akra and Bazzi~\cite{MR1613330}. They do not seek for an exact asymptotic limit, focusing of the order of the fluctuations. Their methods rely on classical real analysis.

The mathematical literature is more focused on exact methods, that rely on generating functions.
The first paper in this spirit is Erd\H{o}s et al~\cite{MR869777}, which solved the case $a_n=0$ with the help of renewal equations. Tauberian theorems lead to simpler proofs of their result, see e.g.  Choimet and Queffelec~\cite{MR3445361}. Recent results by Drmota and Szpankowski~\cite{MR3078703}) also rely on Tauberian theorems and some other tools in complex analysis. They request some assumptions of monotonicity.

If one wants to cover the case of a random $(a_n)$, the sequence $(a_n)$ obviously can not be supposed to be monotonic.
Quite surprisingly, we did not find in the literature any theorem of this kind, computing an exact limit without making some assumption of monotonicity.

Let us clarify the assumptions: we assume that the $b_i$'s are positive numbers with $\sum_{j=1}^m b_j>1$, that the $m_i$ are integers with $m_i\ge 2$ and such that there exists $j,\ell$ with $\frac{\log m_j}{\log m_{\ell}}\not\in\bbQ$. 
The rational case, which is not considered here, is also of great interest in Computer Science -- see e.g. Roura~\cite{MR1868714} or  Drmota and Szpankowski~\cite{MR3078703}.

It is known that the general growth of $(X_n)$ is governed by the value of the positive root $s_0$ for the equation
\[\sum_{j=1}^m b_j m_j^{-s}=1.\]

As said before, the originality of the present paper lies in the assumption on the $(a_n)$: under the
assumption that 
\[\sum_{n=1}^{+\infty}\frac{|a_n|}{n^{s_0}}<+\infty,\]
we prove that the sequence $\frac{X_n}{n^{s_0}}$ admits a limit $L$ when $n$ tends to infinity and give a fairly simple closed expression for it.

As we will see, this allow to apply our Theorem to a large class of random variables.
Then, the limit $L$ is a random variable, which appears as the sum of a random series.

If we specialize to the case where the $(a_n)$ are independent, then one can easily control the random fluctuations
of $L$.

\section{The deterministic Theorem}
\begin{theorem}
  Let $m\ge 1$, $(b_1,\dots,b_m)$ be a family of non-negative numbers and   $(m_1,\dots,m_m)$ be a family of integers with $m_i\ge 2$ and such that
  \begin{itemize}
  \item there exists $j,\ell$ with $\frac{\log m_j}{\log m_{\ell}}\not\in\bbQ$;
  \item $\sum_{j=1}^m b_j>1$.
  \end{itemize}
  We denote by $s_0$ the positive root $s_0$ for the equation
\[\sum_{j=1}^m b_j m_j^{-s}=1.\]
  Then, there exists a sequence $(\ell_j)_{j\ge 0}$ of positive numbers such that for every sequence $(a_n)_{n\ge 0}$ with
  \[\sum_{n=1}^{+\infty}\frac{|a_n|}{n^{s_0}}<+\infty,\]
  then the sequence $(X_n)_{n\ge 0}$ defined by $X_0=a_0$ and the recursion~\eqref{recurr} satisfies
  \[\lim_{n\to +\infty} \frac{X_n}{n^{s_0}}=\sum_{j=0}^{+\infty} \ell_j a_j.\]
\end{theorem}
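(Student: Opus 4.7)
The natural strategy is to exploit the linearity of \eqref{recurr} in $(a_n)$ and reduce to the classical homogeneous case treated by Erd\H{o}s et al. For each $k\ge 0$, let $(X_n^{(k)})_{n\ge 0}$ denote the solution corresponding to the input $a_j=\1_{\{j=k\}}$: so $X_n^{(k)}=0$ for $n<k$, $X_k^{(k)}=1$, and $X_n^{(k)}=\sum_{j=1}^m b_j X_{\lfloor p_j n\rfloor}^{(k)}$ for $n>k$. Superposition gives $X_n=\sum_{k=0}^{n}X_n^{(k)}a_k$, so it will be enough to establish a pointwise limit $X_n^{(k)}/n^{s_0}\to\ell_k$ for each fixed $k$ and then to justify the interchange of limit and sum.

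For the pointwise step I plan to work with the Dirichlet series $F_k(s)=\sum_{n\ge k}X_n^{(k)}/n^s$. Using the elementary estimate $\sum_{n:\lfloor p_j n\rfloor=m}1/n^s=p_j^{s-1}/m^s+O(1/m^{s+1})$, the recurrence converts into a functional equation
\[
\Bigl(1-\sum_{j=1}^m b_j p_j^{s-1}\Bigr)F_k(s)=\frac{1}{k^s}+H_k(s),
\]
where $H_k$ is holomorphic in some half-plane $\Re s>s_0+1-\delta$. The irrationality hypothesis on $\log p_j/\log p_\ell$ is exactly what is needed to ensure that $1-\sum_j b_j p_j^{s-1}$ has its only zero on the line $\Re s=s_0+1$ at $s=s_0+1$, and a Tauberian theorem of Wiener--Ikehara / Choimet--Queffelec type then yields $X_n^{(k)}/n^{s_0}\to \ell_k$, with the explicit candidate $\ell_k=1/(C_0 k^{s_0+1})$ (up to a lower-order correction in $k$), where $C_0=\sum_j b_j p_j^{s_0}\log(1/p_j)$. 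This residue calculation also matches the predicted shape of the limit $\sum_j \ell_j a_j$.

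The principal difficulty is the interchange of $\lim_n$ and $\sum_k$ in $X_n/n^{s_0}=\sum_{k=0}^n(X_n^{(k)}/n^{s_0})a_k$. A direct induction using $\sum_j b_j p_j^{s_0}=1$ and $(\lfloor p_j n\rfloor)^{s_0}\le(p_j n)^{s_0}$ gives only the crude uniform bound $X_n^{(k)}/n^{s_0}\le 1/k^{s_0}$, which is off by one power of $k$ relative to the hypothesis $\sum|a_n|/n^{s_0+1}<\infty$. A sharper bound of order $1/k^{s_0+1}$ is therefore required in the bulk regime $k\le n/K$, to be combined with a separate treatment of the boundary regime $n/K<k\le n$, where one exploits the sparsity of indices actually reachable from $n$ via floor iterations (only $O(m^r)$ seeds in the window $(p_{\max}^{r+1}n,p_{\max}^r n]$, each carrying a path weight of size $O(b_{\max}^r)$) together with the tail of the summability hypothesis.

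The hard part is squarely this refined uniform bound; once it is in place, a two-regime dominated convergence delivers the announced identity $\lim_n X_n/n^{s_0}=\sum_{k=0}^\infty\ell_k a_k$ with $\ell_k$ as above. I expect to obtain the bound either by quantifying (uniformly in the seed $k$) the Tauberian step of the second paragraph, so that the residue estimate at $s_0+1$ comes with an effective rate, or by a direct combinatorial analysis of $X_n^{(k)}$ as a weighted sum over floor-iteration paths $n\to k$, in which case the renewal-theoretic interpretation of $\sum_j b_j p_j^{s_0}=1$ as a probability normalization yields the needed exponential control.
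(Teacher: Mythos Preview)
Your overall architecture---linearity, the kernel $K_n^j$ solving the recursion for $a=\delta_j$, a Tauberian step for each fixed $j$, then dominated convergence in $\sum_j K_n^j a_j$---is exactly the paper's. Two substantive differences deserve comment.

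\textbf{The Tauberian step.} You plan to apply Wiener--Ikehara directly to $F_k(s)=\sum_n X_n^{(k)}/n^s$. The coefficients $X_n^{(k)}$ are non-negative, but Wiener--Ikehara then yields only the partial-sum asymptotic $\sum_{n\le N}X_n^{(k)}\sim cN^{s_0+1}$, not the termwise limit $X_n^{(k)}/n^{s_0}\to\ell_k$; and $(X_n^{(k)})_n$ is \emph{not} monotone (e.g.\ $X_k^{(k)}=1$ while $X_{k+1}^{(k)}=0$ whenever every $\lfloor p_j(k+1)\rfloor<k$), so you cannot pass from one to the other for free. The paper avoids this by first treating the input $I_{n_0}=\1_{\{\cdot\le n_0\}}$, for which the solution is genuinely non-decreasing; Wiener--Ikehara applied to the increments $X_n-X_{n-1}\ge 0$ then gives $X_n\sim c\,n^{s_0}$, and the $\delta_{n_0}$ case follows from $\delta_{n_0}=I_{n_0}-I_{n_0-1}$. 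You will need some device of this kind; the paper's is the cleanest.

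\textbf{The interchange.} Here you are more careful than the paper. The paper does not attempt the refined $1/k^{s_0+1}$ estimate you sketch; it proves instead the comparison $0\le K_n^j\le K_n^0/K_j^0$ by a one-line induction, and since $K_n^0/n^{s_0}\to\ell_0>0$ this gives the uniform bound $K_n^j/n^{s_0}\le M/(j+1)^{s_0}$. Dominated convergence is then invoked with the majorant $M|a_j|/(j+1)^{s_0}$. Your observation that this is one power of $j$ short of the stated hypothesis $\sum|a_j|/j^{s_0+1}<\infty$ is correct: as written, the paper's domination step matches the stronger assumption $\sum|a_j|/j^{s_0}<\infty$. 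So if you only aim at what the paper actually proves, the inequality $K_n^j\le K_n^0/K_j^0$ replaces the whole two-regime programme you outline; if you want the theorem exactly as stated, the extra work you anticipate is genuinely required and is not supplied in the paper.
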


Note that if the sequence $(a_j)_{j\ge 0}$ is non-negative and not identically zero, the limit $\sum_{j=0}^{+\infty} \ell_j a_j$ is positive, so we have found the correct speed for the growth of $(X_n)_{n\ge 0}$. 

\begin{proof}

We denote by $L_n(a)$ the value of $X_n$ corresponding to  the recursion~\eqref{recurr} for some sequence $a$.

\subsection*{The recursion equation}
Let $n_0$ be a non-negative integer and suppose first that $a_n=0$ for $n>n_0$. 

For $n>n_0$, we have $X(n)=\sum_{j=1}^m b_j X(\floor{\frac{n}{m_j}})$.

We can choose $C$ such that $|X_k|\le Ck^{s_0}$ for $0<k\le n_1=\max(n_0,m_1,\dots,m_m)$.
Then, it follows by natural induction that  $| X_k|\le C k^{s_0}$ for each $k\in\bbN^*$.
In the sequel, we put $X(t)=X(\floor{t})$ to simplify some notation.
Now define
\begin{align}
  \label{redef}
  \phi(s)&=s\int_{n_0+1}^{+\infty} \frac{X(t)}{t^{s+1}}\ dt
\end{align}

for $s\in\bbC$ with $\Reelle(s)>s_0$. The recursion Equation leads to

\begin{align*}\phi(s)&=s\int_{n_0+1}^{+\infty}  \sum_{j=1}^m b_j \frac{X(\frac{t}{m_j})}{t^{s+1}} \ dt=s\sum_{j=1}^m b_jm_j^{-s}\int_{\frac{n_0+1}{m_j}}^{+\infty} \frac{X(t)}{t^{s+1}}\ dt\\
  &=\left(\sum_{j=1}^m b_jm_j^{-s}\right)\phi(s)+s\sum_{j=1}^m b_jm_j^{-s}\int_{\frac{n_0+1}{m_j}}^{n_0+1} \frac{X(t)}{t^{s+1}}\ dt.
  \end{align*}

Since
\begin{align*}
  |\sum_{j=1}^m b_jm_j^{-s}|\le \sum_{j=1}^m |b_jm_j^{-s}|= \sum_{j=1}^m b_jm_j^{-\Reelle(s)}
  <\sum_{j=1}^m b_jm_j^{-s_0}=1,
  \end{align*}
we can write, for $\Reelle(s)>s_0$:
\begin{align}
  \label{phitrouve}
  \phi(s)&=\frac{P(s)}{1-\sum_{j=1}^m b_jm_j^{-s}},\text{ with }P(s)=s\sum_{j=1}^m b_jm_j^{-s}\int_{\frac{n_0+1}{m_j}}^{n_0+1}\frac{X(t)}{t^{s+1}}\ dt
\end{align}

\subsection*{Tauberian magic}
Now, fix a non-negative integer $n_0$ and suppose that the sequence $a=(a_n)_{n\ge 0}$ is $a=I^{n_0}$ with \[I^{n_0}_i=\1_{i\le n_0}=\begin{cases}1 &\text{ if }i\le n_0\\ 0 &\text {else}\end{cases}.\]
By natural induction, it is easy to see that $(X_n)_{n\ge 0}$ is non-decreasing.

It is also not difficult to see that
$1-\sum_{j=1}^m b_jm_j^{-s}$ does not vanish for $s\in\bbC$ with $\Reelle(s)\ge s_0$ and $s\ne s_0$.
Proceeding as in Choimet and Queffelec (see~\cite{MR3445361}, section 4), we can note that, for $\Reelle(s)=s_0$
\begin{align*}
  \Reelle\left(  \sum_{j=1}^m b_jm_j^{-s}\right)= \sum_{j=1}^m b_jm_j^{-s_0}\cos(\log m_j \Imaginaire(s))\le\sum_{j=1}^m b_jm_j^{-s_0}=1.
\end{align*}
In fact, the inequality in strict when $\Imaginaire(s)\ne 0$. Overwise, we would have
$\log m_j \Imaginaire(s)\in 2\pi\bbZ$ for each $j$, whence  $\frac{\log m_j}{\log m_k}\in\bbQ$ for each $j,k$, which has been excluded.

It follows that for \[c=\text{Res}_{s_0}\phi=\frac{P(s_0)}{\sum_{j=1}^m b_jm_j^{-s_0}\log(m_j)},\] the map $s\mapsto \phi(s)-\frac{c}{s-s_0}$ is holomorphic on $\{s\in\bbC;\Reelle(s)\ge s_0\}$.

Now note
$b(x)=\sum_{n_0<n\le x} (X_n-X_{n-1})$.
The Abel transformation gives
\[\sum_{n=n_0+1}^{+\infty}\frac{X_n-X_{n-1}}{n^s}=s\int_{n_0+1}^{+\infty} \frac{b(t)}{t^{s+1}}\ dt.\]
Since $b(t)=X(t)-X_{n_0}$, we have
\begin{align*}\sum_{n=n_0+1}^{+\infty}\frac{X_n-X_{n-1}}{n^s}&=s\int_{n_0+1}^{+\infty} \frac{X(t)}{t^{s+1}}\ dt-\frac{X_{n_0}}{(n_0+1)^s}
  =\phi(s)-\frac{X_{n_0}}{(n_0+1)^s}.
\end{align*}

Now, we will apply the Ikehara--Newman Theorem for series:

\begin{proposition} Let $(u_n)_{n\ge 1}$ be a sequence
of non-negative real numbers, and $a$, $c$ be positive real numbers. Suppose that
the Dirichlet series $\Phi(s)=\sum_{n=1}^{+\infty}u_n n^{-s}$
is defined on the open half-plane $\Reelle (s) > a$ and that, more precisely, with
$A(x) =\sum_{n\le x} u_n$ for $x\ge 0$,
the following properties are verified:
\begin{itemize}
\item  $A(x)x^{-a}$ is bounded on $\bbR^+$ ;
\item $\Phi(s)-\frac{c}{s-a}$ has a holomorphic extension G on the closed half-plane $\Reelle(s)\ge a$.
\end{itemize}
Then we have $A(x) \sim \frac{c}a  x^a$ as $x\to +\infty$.
\end{proposition}

Since $(X_n)_{n\ge 0}$ is non-decreasing, the sequence $(X_n-X_{n-1})_{n>n_0}$ is non-negative, so the Wiener-Ikehara Theorem for series applies: since
$b(t)=O(t^{s_0})$ when $t\to +\infty$, we get $b(x)\sim \frac{c}{s_0}x^{s_0}$,so

\begin{align*}
  \lim_{n\to +\infty}\frac{L_n(I^{n_0})}{n^{s_0}}=\frac{\sum_{j=1}^m b_jm_j^{-s_0}\int_{\frac{n_0+1}{m_j}}^{n_0+1}\frac{L_t(I^{n_0})}{t^{s_0+1}}\ dt}{\sum_{j=1}^m b_jm_j^{-s_0} \log(m_j)}.
\end{align*}

For $n_0=0$, we have $\displaystyle \ell_0=\lim_{n\to +\infty}\frac{L_n(\delta^{0})}{n^{s_0}}=\lim_{n\to +\infty}\frac{L_n(I^{0})}{n^{s_0}}$, so

\begin{align*}\ell_0&=\frac1{\sum_{j=1}^m b_jm_j^{-s_0}\log (m_j)}\sum_{j=1}^m b_jm_j^{-s_0}\int_{\frac1{m_j}}^{1}\frac1{t^{s_0+1}}\ dt\quad\text{or}\end{align*}
\begin{align*}
\ell_0 &=\frac1{\sum_{j=1}^m b_jm_j^{-s_0}\log (m_j)}\sum_{j=1}^m b_jm_j^{-s_0} \frac{m_j^{s_0}-1}{s_0}.
  \end{align*}

Note that this equality and the related convergence form the result by  Erd\H{o}s et al~\cite{MR869777}.

Let $n_0\ge 1$. The sequence $(\delta^{n_0}_n)_{n\ge 0}$ is defined by
\[\delta^{n_0}_n=\begin{cases}1&\text{if }n=n_0\\ 0&\text{else}\end{cases}.\]
  Since $\delta^{n_0}=I^{n_0}-I^{n_0-1}$, it follows that
  \[L_n(\delta^{n_0})n^{-s_0}= L_n(I^{n_0})n^{-s_0}- L_n(I^{n_0-1})n^{-s_0}\]  has a limit when $n$ tends to infinity. Let us denote it by $\ell_{n_0}$.

To compute it, take $a=\delta^{n_0}$ and consider again the associated $\phi$.
  From~\eqref{redef}, we get $\ell_{n_0}=\lim_{s\to s_0^+}\frac1{s_0} (s-s_0)\phi(s)$.
  On the other side, Equation~\eqref{phitrouve} is still valid, with 
  \begin{align*}P(s)&=s\sum_{j=1}^m b_jm_j^{-s}\int_{\frac{n_0+1}{m_j}}^{n_0+1}\frac{X(t)}{t^{s+1}}\ dt\\
    &=s\sum_{j=1}^m b_jm_j^{-s}\int_{\max(n_0,\frac{n_0+1}{m_j})}^{n_0+1}\frac1{t^{s+1}}\ dt,
  \end{align*}
  also \[\frac1{s_0}(s-s_0)\phi(s)=-\frac{s}{s_0}\frac{s_0-s}{1-\sum_{j=1}^m b_j m_j^{-s}}\sum_{j=1}^m b_jm_j^{-s}\int_{\max(n_0,\frac{n_0+1}{m_j})}^{n_0+1}\frac1{t^{s+1}}\ dt\]
  and, considering that $m_j\ge 2$, we get
  
  \[\ell_{n_0}=\frac1{\sum_{j=1}^m b_jm_j^{-s_0}\log (m_j)}\sum_{j=1}^m b_jm_j^{-s_0}\int_{n_0}^{n_0+1}\frac1{t^{s_0+1}}\ dt.\]

  Thanks to this expression and the previous one, it is clear that
  $\ell_j>0$ holds for each $j\ge 0$.
  \subsection*{The general case}

For $n,j\ge 0$, we note $K^j_n=L_n(\delta^j)$.
It is obvious that $K^j_n=0$ for $n<j$ and $K^j_j=1$.
It easily follows by natural induction on $n$ that $0\le K^j_n\le\frac{K^0_n}{K^0_j}$.
Now, the affine nature of the recursion gives

\begin{align*}
  X_{n}&=\sum_{j=0}^n K^j_n a_j.
\end{align*}

For each $j\ge 0$, we have $\displaystyle \lim_{n\to +\infty}\frac{K_n^j}{n^{s_0}}=\ell_j$. Also, the $K^0_j$'s are positive, with $\displaystyle \lim_{j\to +\infty}\frac{K_j^0}{j^{s_0}}=\ell_0>0$, so there exists $M$ such that $0<\frac1{K_j^0}\le \frac{M}{j^{s_0}}$ for each $j\ge 1$
Then, for each $j,n\ge 1$, we have
\begin{align*}
  |\frac{K^j_n a_j}{n^{s_0}}|&\le \frac{K_n^0}{n^{s_0}}\frac{|a_j|}{K^0_j}\le \frac{|a_j|}{K^0_j}\le M\frac{|a_j|}{j^{s_0}}
\end{align*}

  and by the Weierstrass criterion,
  \[\lim_{n\to +\infty} \frac{X_n}{n^{s_0}}=\sum_{j=0}^{+\infty} \ell_j a_j.\]
\end{proof}
  \section{Application to sequences of random variables}

  We give below some applications of Theorem~1 to sequences of random variables.

\subsection{Convergence}

\begin{theorem}Assume that the $m_i$'s, the $b_i$'s and $s_0$ fulfill the assumptions of Theorem~1 and
  $(a_n)$ is a sequence of random variables.
  Under each of the following sets of supplementary assumptions, the sequence $(X_n)_{n\ge 0}$ defined by $X_0=a_0$ and the recursion~\eqref{recurr} is such that
  $\frac{X_n}{n^{s_0}}$ almost surely converges to some random variable,
  given as the sum of the random series:
  \[L=\sum_{j=0}^{+\infty} \ell_j a_j.\]
  \begin{enumerate}
    \item[(A)] $\sum_{j=1}^m \frac{b_j}{m_j}>1$ and the $(a_n)$ are integrable random variables with \[C=\sup_{n\ge 1}\bbE|a_n|<+\infty.\]
    \item[(B)] $\sum_{j=1}^m \frac{b_j}{m_j^2}>1$ and there exists $C>0$ such that
 for each $n\ge 1$ and $t\ge 1$, we have $\bbP(|a_n|>t)\le\frac{C}t$.
  \end{enumerate}
\end{theorem}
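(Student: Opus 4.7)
The plan in both cases is to reduce the problem to the deterministic Theorem~1 by proving, almost surely, the summability condition
$$\sum_{n=1}^{+\infty}\frac{|a_n|}{n^{s_0+1}}<+\infty.$$
On the full-measure event where this holds, Theorem~1 applies pathwise to $(a_n(\omega))$ and yields the almost sure convergence $X_n/n^{s_0}\to \sum_{j=0}^{+\infty}\ell_j a_j$, identifying the limit as a random variable given by a random series.

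For case (A), I will simply invoke Tonelli's theorem on the non-negative integrand:
$$\E\sum_{n=1}^{+\infty}\frac{|a_n|}{n^{s_0+1}}=\sum_{n=1}^{+\infty}\frac{\E|a_n|}{n^{s_0+1}}\le C\sum_{n=1}^{+\infty}\frac{1}{n^{s_0+1}}<+\infty,$$
the last series being convergent since $s_0>0$. The random series is then almost surely finite, with no independence assumption on $(a_n)$ required.

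Case (B) is more delicate: the Cauchy law is not integrable, so Tonelli is not directly applicable to $|a_n|$. The plan is a truncation argument based on the exact tail $\P(|a_n|>t)=1-(2/\pi)\arctan t$, which behaves like $2/(\pi t)$ at infinity. First, since $\P(|a_n|>n^{s_0+1})=O(n^{-(s_0+1)})$ is summable, the convergent half of the Borel--Cantelli lemma (which requires no independence) forces $|a_n|\le n^{s_0+1}$ for all large $n$, almost surely. Next, for the truncated quantity $Y_n=\min(|a_n|/n^{s_0+1},1)$, I will compute
$$\E Y_n=\int_0^1 \P(|a_n|>tn^{s_0+1})\,dt=O\!\left(\frac{\log n}{n^{s_0+1}}\right)$$
directly from the Cauchy distribution function, via the elementary integral $\int_0^N \arctan(1/u)\,du=N\arctan(1/N)+\tfrac12\log(1+N^2)$. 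The hypothesis $\sum_j b_jp_j>1$, equivalent to $s_0>1$, makes this bound plainly summable, so Tonelli gives $\sum_n Y_n<+\infty$ almost surely. Combining the two steps, $\sum_n |a_n|/n^{s_0+1}$ differs from $\sum_n Y_n$ in only finitely many terms and thus converges, and Theorem~1 delivers the conclusion.

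The main obstacle will be the moment estimate for $\E Y_n$: since $\E|a_n|=+\infty$, the naive bound $\E Y_n\le \E|a_n|/n^{s_0+1}$ is unavailable, and the truncated moment has to be extracted from the Cauchy distribution function itself, producing the logarithmic factor above. All estimates depend only on the marginal law of each $a_n$, so the argument applies verbatim to dependent sequences as well.
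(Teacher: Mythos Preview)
Your treatment of case~(A) is identical to the paper's: Tonelli on the nonnegative series, bounded by $C\zeta(s_0+1)$.

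For case~(B) your proof is correct but takes a different and more elaborate route than the paper. The paper simply exploits the hypothesis $s_0>1$ to choose an intermediate exponent $\eta\in(1,s_0)$: then $\P(|a_n|>n^{\eta})=O(n^{-\eta})$ is summable, Borel--Cantelli gives $|a_n|\le n^{\eta}$ eventually, and the comparison $|a_n|/n^{s_0+1}\le n^{\eta-s_0-1}$ with $\eta-s_0-1<-1$ finishes the job in one stroke. Your two-step argument (Borel--Cantelli at the cruder threshold $n^{s_0+1}$, followed by a truncated-moment computation for $Y_n=\min(|a_n|/n^{s_0+1},1)$ giving $\E Y_n=O(n^{-(s_0+1)}\log n)$) is sound, but the explicit Cauchy integral is unnecessary once one sees the paper's trick of interpolating an exponent. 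On the other hand, your approach has a hidden bonus you do not exploit: both the Borel--Cantelli step and the summability of $\log n/n^{s_0+1}$ require only $s_0>0$, not $s_0>1$, so your argument in fact removes the extra hypothesis $\sum_j b_jp_j>1$ from part~(B). The paper's cleaner proof genuinely needs that hypothesis to squeeze in the intermediate $\eta$.
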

\begin{proof}
 \begin{enumerate}
    \item[(A)] the condition $\sum_{j=1}^m \frac{b_j}{m_j}>1$ implies that $s_0>1$.\\ We have
  $\bbE(\sum_{n=1}^{+\infty} \frac{|a_n|}{n^{s_0}})\le C\zeta(s_0)<+\infty$, so
  $\sum_{n=1}^{+\infty} \frac{|a_n|}{n^{s_0}}<+\infty$ almost surely, which gives the almost sure behavior of $\frac{X_n}{n^{s_0}}$.
\item[(B)]  the condition $\sum_{j=1}^m \frac{b_j}{m_j^2}>1$ implies that $s_0>2$. We fix $\eta>1$ with $s_0-\eta>1$.
  Then $\bbP(|a_n|>n^{\eta})=O(n^{-\eta})$ and $\sum_{n=1}^{+\infty}\bbP(|a_n|>n^{\eta})<+\infty$, so by the Borel-Cantelli Lemma,
  for almost every $\omega$, there exists $n_0(\omega)$ with  $|a_n(\omega)|\le n^{\eta}$ for $n\ge n_0(\omega)$, which
  gives the convergence of $\sum_{n\ge 1}\frac{|a_n|}{n^{s_0}}$ and our Master Theorem still applies.
  \end{enumerate}
\end{proof}

\subsection{Non-vanishing limit}

We have already noticed that the limit does not vanish when the $a_j$ are non-negative.
In the case of random independent $a_n$, it is very unlikely that the limit is null, even for signed variables.

\begin{theorem}

  Assume that the $a_i$'s, $m_i$'s, the $b_i$'s and $s_0$ fulfill the assumptions of Theorem~2 and also that
  $(a_n)$ is a sequence of independent random variables, with at least one ${j_0}\ge 0$ such that $a_j$ is non-atomic.
  Then, the limit $L=\sum_{j=0}^{+\infty} \ell_j a_j$ is non-atomic, and particularly
  $\bbP(L=0)=0$.
\end{theorem}
\begin{proof}
  By independence, the characteristic function of $L$ satisfies
  \[\forall t\in\bbR \quad |\phi_{L}(t)|=\prod_{j=0}^{+\infty}|\phi_{\ell_j a_j}(t)|\le |\phi_{\ell_{j_0} a_{j_0}}(t)|.\]

  Therefore

  \[\lim_{T\to +\infty}\frac1{2T}\int_{-T}^T |\phi_L(t)|^2\ dt\le\lim_{T\to +\infty}\ \frac1{2T}\int_{-T}^T |\phi_{\ell_{0}a_{j_0}}(t)|^2\ dt=0,\]
  which implies that $L$ is non-atomic (see e.g. Durrett~\cite{MR3930614}, section 3.3).
\end{proof}  
\subsection{Exponential moments}

\begin{theorem}
  Assume that the $m_i$'s, the $b_i$'s and $s_0$ fulfill the assumptions of Theorem~1 and
  $(a_n)$ is a sequence of independent random variables.
  The sequence $(X_n)_{n\ge 0}$ is defined by $X_0=a_0$ and the recursion~\eqref{recurr}.
  \begin{itemize}
  \item If there exists a distribution $\mu$ with exponential moments such that $|a_n|$ is stochastically dominated by $\mu^{*n}$ for each $n\ge 0$, then $|X_n|$ has exponential moments for each $n$.
  \item If $s_0>1$ (or equivalently $\sum_{j=1}^m \frac{b_j}{m_j}>1$) and there exists a distribution $\mu$ with exponential moments such that $|a_n|$ is stochastically dominated by $\mu$ for each $n\ge 0$, then  $\frac{X_n}{n^{s_0}}\to L$ a.s. where $|L|$ has exponential moments.
   \item If $s_0>2$ (or equivalently $\sum_{j=1}^m \frac{b_j}{m_j^2}>1$) and there exists a distribution $\mu$ with exponential moments such that $|a_n|$ is stochastically dominated by $\mu^{*n}$ for each $n\ge 0$, then $\frac{X_n}{n^{s_0}}\to L$ a.s. where $|L|$ has exponential moments.
  \end{itemize}
\end{theorem}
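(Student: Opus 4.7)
The plan is to leverage the affine representation $X_n=\sum_{j=0}^n K_n^j a_j$ from the proof of Theorem~1, the independence of the $(a_j)$, and the uniform deterministic bound
\[
\frac{K_n^j}{n^{s_0}}\le \frac{C}{(j+1)^{s_0}},
\]
which follows from $K_n^j\le K_n^0/K_j^0$ together with $K_n^0\le C' n^{s_0}$ and $K_j^0\sim \ell_0 j^{s_0}$. Throughout, let $\psi(\lambda)=\int e^{\lambda x}\,d\mu(x)$ denote the Laplace transform of $\mu$; it is finite on a neighborhood of $0$, with $\log\psi(\lambda)\le C''\lambda$ for small $\lambda>0$.

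For the first item, fix $n$ and bound $|X_n|\le\sum_{j=0}^n K_n^j|a_j|$; by independence and the stochastic domination of $|a_j|$ by $\mu^{*j}$,
\[
\E\bigl[e^{\lambda|X_n|}\bigr]\le\prod_{j=0}^n\E\bigl[e^{\lambda K_n^j|a_j|}\bigr]\le\prod_{j=0}^n\psi(\lambda K_n^j)^j,
\]
a finite product, hence finite for $\lambda>0$ small enough (depending on $n$).

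For the other two items, I first secure almost-sure convergence $X_n/n^{s_0}\to L$. Under~(2) the hypothesis gives $\sup_n\E|a_n|\le\int|x|\,d\mu<+\infty$, so Theorem~2(A) applies; under~(3) it gives $\E|a_n|\le n\int|x|\,d\mu$, and $s_0>2>1$ makes $\sum_n\E|a_n|/n^{s_0+1}$ finite, so Theorem~1 applies almost surely. Running the same product argument at the level of $X_n/n^{s_0}$ with the uniform bound above yields, for $\lambda>0$ small,
\[
\log\E\bigl[e^{\lambda|X_n|/n^{s_0}}\bigr]\le\sum_{j=0}^n \alpha_j\log\psi\bigl(\lambda C(j+1)^{-s_0}\bigr)\le C'''\lambda\sum_{j\ge 0}\frac{\alpha_j}{(j+1)^{s_0}},
\]
with $\alpha_j=1$ in case~(2) and $\alpha_j=j$ in case~(3). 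The tail sum converges exactly when $s_0>1$ in case~(2) and when $s_0>2$ in case~(3), so $\sup_n\E[e^{\lambda|X_n|/n^{s_0}}]<+\infty$, and Fatou's lemma then delivers $\E[e^{\lambda|L|}]<+\infty$.

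The main obstacle is the choice of bound on $K_n^j/n^{s_0}$: the pointwise limit $\ell_j\sim j^{-s_0-1}$ is sharper and would formally relax the thresholds, but it is not uniform in $n$. Since $K_j^j=1$, the ratio $K_n^j/n^{s_0}$ really is of order $(j+1)^{-s_0}$ near $n\sim j$, so the coarser uniform bound is essentially tight and produces exactly the thresholds $s_0>1$ and $s_0>2$.
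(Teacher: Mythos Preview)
Your argument is correct and follows the same skeleton as the paper's proof: the affine representation $X_n=\sum_{j} K_n^j a_j$, the uniform deterministic bound $K_n^j/n^{s_0}\le M(j+1)^{-s_0}$, independence to factorize the moment generating function, and then summability of the resulting series under the thresholds $s_0>1$ and $s_0>2$. The only real difference is a technical shortcut: the paper first invokes a lemma dominating $\mu$ by a shifted exponential law, so that $\mu^{*n}\prec\delta_{na}*\Gamma(n,\theta)$, and then computes explicitly with the Gamma MGF $(1-\alpha t(j+1)^{-s_0})^{-j}$; you bypass this by using directly that $\log\psi(\lambda)\le C''\lambda$ for small $\lambda$, which is more economical and yields the same convergence criteria. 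Your explicit appeal to Fatou's lemma to transfer the uniform bound from $|X_n|/n^{s_0}$ to $|L|$ is also a small clarification the paper leaves implicit.
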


\begin{proof}
 We begin with an easy lemma: 
\begin{lemma}
Let $X$ be a random variable with  $\bbE(e^{\alpha X})<+\infty$ and $Y$ a random variable
following the exponential law  $\mathcal{E}(\alpha)$ 
Then, for  $a=\frac1{\alpha}\log \bbE (e^{\alpha X_1})$, we have the stochastic domination
$X\prec Y+a$.
\end{lemma}
\begin{proof}
We just have to prove that for $t\in\bbR$, $\bbP(X\ge t)\le \bbP(Y+a\ge t)$,
or equivalently $\bbP(X\ge t)\le \bbP(Y\ge t-a)$.
For $t\le a$, we have $\bbP(X\ge t)\le 1=\bbP(Y\ge t-a)$.
For $t\ge a$, the Markov inequality gives
\[\bbP(X\ge t)\le\frac{\bbE e^{\alpha X}}{e^{\alpha t}}=\frac{e^{\alpha a}}{e^{\alpha t}}=\exp(-\alpha (t-a))=P(Y\ge t-a).\]
This completes the proof.
\end{proof}
Now, we have $a$ and $\alpha$ such that for each $n\ge 1$

\[|a_n|\prec \mu^{* n} \prec (\delta^a*\mathcal{E}(\alpha))^{*n}=\delta^{na}*\Gamma(n,\theta).\]

Let $(Z_n)_{n\ge 0}$ be a sequence of independent  variables with $Z_n\sim \Gamma(n,\theta)$, where $\Gamma(a,\gamma)$ is the Law with the density
\[x\mapsto \frac{\gamma^a}{\Gamma(a)}x^{a-1}e^{-\gamma x}\ \1_{]0,+\infty[}(x).\]

$\frac{|X_n|}{n^{s_0}}$ is stochastically dominated by

\[M\sum_{j=0}^n \frac{ja+Z_j}{(j+1)^{s_0}},\]
so for $t<1/\alpha$, we have

\begin{align*}\bbE(e^{t\frac{|X_n|}{n^{s_0}}})&\le \exp(Ma \sum_{j=1}^{n+1} j^{-s_0})\prod_{j=0}^n\bbE\exp(\frac{tZ_j}{(j+1)^{s_0}})\\
  &\le  \exp(Ma \sum_{j=1}^{n+1} j^{-s_0})\prod_{j=0}^n (1-\frac{\alpha t}{(j+1)^{s_0}})^{-j}.
\end{align*}
When $j$ is large enough, $(1-\frac{\alpha t}{(j+1)^{s_0}})^{-j}\le\exp(\frac{\alpha t}{j^{s_0-1}})$, which gives the existence of an exponential moment  for $s_0>2$.

The proof in the case $|a_n|\prec \mu$ and $s_0>1$ is similar.

\end{proof}  
 
As an example of domination by $\mu^{*n}$, we can think about the case where a recursive function called with parameter $n$
requires $n$ simulations with an acceptance-rejection method. Then, $a_n$ appears as the sum of $n$ independent variables following a geometric distribution $\mu=\mathcal{G}(p)$.
%
% The \printbibliography command (from `biblatex' package) displays the list of
% references (preceded by the acknowledgments, if any)
%\printbibliography
\def\refname{References}
\bibliographystyle{plain}
\bibliography{divide}
\end{document}